\newcommand{\no}[1]{\widebar{#1}}
\def\F{\mathcal F}
\def\M{\mathcal M}
\def\pr{\mathbb{P}}
\def\prev{\mathbb{P}}
\def\V{\mathcal{V}}
\def\C{\mathscr{C}}
\title{Iterated Conditionals and \\ Characterization of P-entailment}
\author{Niki Pfeifer\inst{1}\thanks{Both authors contributed equally to the article and are listed alphabetically.
}\,\thanks{Supported by the BMBF project  01UL1906X.} 
\and  Giuseppe Sanfilippo\inst{2}$^\star$\;\thanks{Member of the GNAMPA Research Group 
and partially supported by the INdAM–GNAMPA Project 2020 Grant U-UFMBAZ-2020-000819.}
}
\author{Angelo Gilio\inst{1}
\thanks{Both authors contributed equally to the article and are listed alphabetically.
}\;
\thanks{Retired}\and  Giuseppe Sanfilippo\inst{2}$^\star$}
\institute{Department SBAI, University of Rome ``La Sapienza'', Rome, Italy
	\\ \email{angelo.gilio@sbai.uniroma1.it}
	\and
	Department of Mathematics and Computer Science,
	University of Palermo, Italy
	\\ \email{giuseppe.sanfilippo@unipa.it}
}
\authorrunning{Angelo Gilio and Giuseppe Sanfilippo}
\begin{document}
	\maketitle
\begin{abstract}
In this paper we deepen, in the setting of coherence, some results obtained in recent papers on the notion of p-entailment of Adams and its relationship with conjoined and iterated conditionals. We recall that conjoined and iterated conditionals are suitably defined in the framework of conditional random quantities. Given a family $\mathcal{F}$ of $n$ conditional events $\{E_1|H_1,\ldots, E_n|H_n\}$ we denote by $\mathscr{C}(\mathcal{F})=(E_1|H_1)\wedge \cdots \wedge  (E_n|H_n)$ the conjunction of the conditional events in $\F$. We introduce the iterated conditional $\mathscr{C}(\mathcal{F}_2)|\mathscr{C}(\mathcal{F}_1)$, where $\mathcal{F}_1$ and $\mathcal{F}_2$ are two finite families of conditional events, by showing that the prevision of  $\mathscr{C}(\mathcal{F}_2)\wedge \mathscr{C}(\mathcal{F}_1)$ is the product of the prevision of  $\mathscr{C}(\mathcal{F}_2)|\mathscr{C}(\mathcal{F}_1)$ and the prevision of $\mathscr{C}(\mathcal{F}_1)$.
Likewise the well known equality $(A\wedge H)|H=A|H$, we show that
$
(\mathscr{C}(\mathcal{F}_2)\wedge \mathscr{C}(\mathcal{F}_1))|\mathscr{C}(\mathcal{F}_1)=
\mathscr{C}(\mathcal{F}_2)|\mathscr{C}(\mathcal{F}_1)$. Then, we consider the case $\mathcal{F}_1=\mathcal{F}_2=\mathcal{F}$ and we verify for the prevision $\mu$ of $\mathscr{C}(\F)|\mathscr{C}(\mathcal{F})$ that the unique coherent assessment is  $\mu=1$ and, as a consequence,  $\mathscr{C}(\mathcal{F})|\mathscr{C}(\mathcal{F})$ coincides with the constant 1. Finally, by assuming $\mathcal{F}$ p-consistent, we deepen some previous characterizations of p-entailment by showing that $\mathcal{F}$ p-entails a  conditional event $E_{n+1}|H_{n+1}$ if and only if the iterated conditional $(E_{n+1}|H_{n+1})\,|\,\mathscr{C}(\mathcal{F})$ is constant and equal to 1.  We illustrate this characterization by an example related with weak transitivity.
\keywords{Coherence \and Conditional events \and 
	Conditional random quantities\and
	Conditional previsions\and 
	Conjoined conditionals\and 
	Iterated conditionals\and 
	Probabilistic entailment.
}
\end{abstract}
\section{Introduction}
The study of logical operations among conditional events has been considered in many papers (see, e.g., \cite{benferhat97,CoSV13,CoSV15,Douven19,FlGH20,GoNW91,Kauf09,McGe89,NgWa94}).  In a pioneering paper, written in 1935, de Finetti (\cite{defi36}) proposed  a three-valued logic for conditional events. 
Many often, conjunctions and disjunctions  have been defined as suitable conditional events (see, e.g., \cite{adams75,Cala87,Cala17,CiDu12,CiDu13,GoNW91}). 
However, in this way   classical probabilistic properties are lost.  For instance, the lower and upper probability bounds for the conjunction  are no more   the Fr\'echet-Hoeffding bounds (\cite{SUM2018S}).
A more general approach to conjunction  has been given in  \cite{Kauf09,McGe89} and, in the setting of coherence,  in
\cite{GiSa13c,GiSa13a,GiSa14,GiSa17,GiSa19}, where also the notion of iterated conditional has been studied. 
In these papers the notions of compound and iterated  conditionals
are defined as suitable  {\em conditional random quantities} with a finite number of  possible values in the interval $[0,1]$. The  main relevance of our approach  is theoretical: indeed,  all the basic probabilistic properties  are preserved (for a synthesis see \cite{GiSa21}). For instance, 
 De Morgan’s Laws are satisfied  (\cite{GiSa19}) and
 the Fr\'echet-Hoeffding bounds for the conjunction of conditional events still hold (\cite{GiSa21IJAR}). A suitable notion of  conditional constituent can be introduced, with properties analogous to the case of unconditional events; moreover,  a generalized inclusion-exclusion formula for the disjunction of conditional events is valid (\cite{GiSa20}).
 We also recall that the Lewis’ triviality results (\cite{lewis76}) are avoided because in our theory  the Import-Export Principle is not valid  (see \cite{GiSa14,SGOP20,SPOG18}). 
For some applications of compound and iterated conditionals see, e.g., \cite{GOPS16,GiPS20,SGOP20,SaPG17,SPOG18}.
More specifically, by exploiting iterated conditionals, the  probabilistic modus ponens has been generalized to  conditional events  (\cite{SaPG17}); one-premise and two-premise centering inferences, 
related to the notion of  centering used in Lewis’ logic  (\cite{lewis73}), 
has been examined in 
\cite{GOPS16,SPOG18}.
In \cite{SGOP20}  several (generalized) iterated conditionals have been considered, in order to  properly formalize different kinds of latent information; in particular,  
some intuitive probabilistic assessments
discussed in  \cite{douven11b} have been explained,  by making explicit some  background information.

An interesting aspect which could be possibly investigated concerns the relationship of our notions of compound and iterated  conditionals  with other topics of research, such as belief and  plausibility functions, data fusion,  inductive reasoning, and fuzzy logic
(\cite{CoPV17,coletti04,CoPV20UMI,CoVa18,DuFP20,dubo16,DuLL07,GMMP09,PeVa20,SKIR20}). For instance, 
by recalling  \cite{coletti04}, 
an application of our notion of conjunction  could be given by  interpreting  the membership function of the cartesian product of fuzzy sets as
the  prevision of  conjoined conditionals.

By exploiting conjunction  a  characterization  of the  probabilistic entailment of Adams (\cite{adams75}) for conditionals has been given in  (\cite{GiSa19}). Moreover, by exploiting  iterated conditionals, 
the p-entailment  of $E_3|H_3$  from a p-consistent family $\F=\{E_1|H_1,E_2|H_2\}$ has been characterized by the property that  the iterated conditional $(E_3|H_3)|((E_1|H_1)\wedge (E_2|H_2))$ is constant and coincides with~1 (\cite{GiPS20}).
In this paper, based on a general notion of iterated conditional, we extend this characterization of p-entailment by considering the case where $\F$ is a family of $n$ conditional events.
\\
The paper is organized as follows. After recalling in Section \ref{SEC:2} some preliminary notions and results, in Section \ref{SEC:3} we introduce  the iterated conditional  $\C(\F_2)|\C(\F_1)$, where $\C(\F_1)$ and $\C(\F_2)$ are the conjunctions of the conditional events in two finite families $\F_1$ and $\F_2$. We show that $(\C(F_2)\wedge  \C(\F_1))|\C(\F_1)=\C(\F_2)|\C(\F_1)$
and 
 $\prev[\mathscr{C}(\mathcal{F}_2)\wedge \mathscr{C}(\mathcal{F}_1)]= \prev[\mathscr{C}(\mathcal{F}_2)|\mathscr{C}(\mathcal{F}_1)] \prev[\mathscr{C}(\mathcal{F}_1)]$. Then, we prove that  $\C(\F)|\C(\F)$ is constant and coincides with 1.
 In Section \ref{SEC:3}, by assuming  $\F$ p-consistent, we characterize the p-entailment of $E_{n+1}|H_{n+1}$ from $\F$ by the property that the iterated conditional  $(E_{n+1}|H_{n+1})|\C(\F)$ is constant and coincides with 1.  We also illustrate this characterization by an example related with weak transitivity. 
\section{Preliminary Notions and Results}
\label{SEC:2}
An event $A$ is
a two-valued logical entity which is either  \emph{true}, or \emph{false}.
We use
 the same symbol to refer to an  event and its indicator.
We  denote by
$\Omega$ the sure event and by $\emptyset$ the impossible one.
We denote by $A\land B$ (resp., $A\vee B$), or simply by $AB$, the  conjunction (resp., disjunction) of $A$ and $B$. By $\no{A}$ we denote the negation of $A$. 
We simply write $A \subseteq B$ to denote
that $A$ logically implies $B$.
Given two events $A$ and $H$, with $H \neq \emptyset$, the conditional event $A|H$  is a three-valued logical entity which is \emph{true}, or
	\emph{false}, or \emph{void}, according to whether $AH$ is true, or
	$\no{A}H$ is true, or $\no{H}$ is true, respectively. The negation $\no{A|H}$ of $A|H$ is defined as $\no{A}|H$.
	
In the betting framework, to assess $P(A|H)=x$ amounts to say that, for every real number $s$,  you are willing to pay 
an amount $s\,x$ and to receive $s$, or 0, or $s\, x$, according
to whether $AH$ is true, or $\no{A}H$ is true, or $\no{H}$
is true (bet called off), respectively. Hence, for the random gain $G=sH(A-x)$, the possible values are $s(1-x)$, or $-s\,x$, or $0$, according
to whether $AH$ is true, or $\no{A}H$ is true, or $\no{H}$
is true, respectively. 
We denote by $X$ a \emph{random quantity}, that is  an 
uncertain real quantity,  which has a well determined but unknown value. 
We assume that  $X$ has a finite set of possible values. Given any event $H\neq \emptyset$, 
agreeing to the betting metaphor, if you  assess that the prevision of $``X$ {\em conditional on} $H$'' (or short:  $``X$ {\em given} $H$''), $\pr(X|H)$, is equal to $\mu$, this means that for any given  real number $s$ you are willing to pay an amount $s\mu$ and to receive  $sX$, or $s\mu$, according  to whether $H$ is true, or  false (bet  called off), respectively. In particular, when $X$ is (the indicator of) an event $A$, then $\prev(X|H)=P(A|H)$.
Given a conditional event $A|H$
with  $P(A|H) = x$,
the indicator of $A|H$, denoted by the same symbol, is
\begin{equation}\label{EQ:AgH}
A|H=
AH+x \no{H}=AH+x (1-H)=\left\{\begin{array}{ll}
1, &\mbox{if $AH$ is true,}\\
0, &\mbox{if $\no{A}H$ is true,}\\
x, &\mbox{if $\no{H}$ is true.}\\
\end{array}
\right.
\end{equation} 
Notice that, denoting by $\prev$ the prevision, it holds that  $\prev(AH + x\no{H})=xP(H)+xP(\no{H})=x$. 
The third  value of the random quantity  $A|H$  (subjectively) depends on the assessed probability  $P(A|H)=x$. 
 When $H\subseteq A$ (i.e., $AH=H$), it holds that $P(A|H)=1$; then,
for the indicator $A|H$ it holds that 
\begin{equation}\label{EQ:AgH=1}
A|H=AH+x\no{H}=H+\no{H}=1, \;\; (\mbox{when }
H\subseteq A). 
\end{equation}
Likewise, if $AH=\emptyset$, it holds that $P(A|H)=0$; then
\[
A|H=0+0\no{H}=0, \;\; (\mbox{when }
AH=\emptyset). \]
For the indicator of the negation  of $A|H$ it holds that  $\no{A}|H=1-A|H$.
Given a random quantity $X$ and an event $H \neq \emptyset$, 
with a prevision assessment $\prev(X|H) = \mu$, in our approach, likewise formula (\ref{EQ:AgH}), the conditional random quantity $X|H$ is defined as 
$X|H=XH+\mu\no{H}$.
Notice that  $\prev(XH + \mu\no{H})=\prev(XH)+\mu P(\no{H})=\mu P(H)+\mu P(\no{H})=\mu$.  For a discussion on this extended notion of a conditional random quantity and on the notion of coherence of a prevision assessment see, e.g., \cite{GiSa14,GiSa20,SGOP20}. In betting terms \emph{coherence}  means  that \emph{ in any finite combination of $n$ bets, it cannot happen that, after discarding the cases where the bet is called off, the values  of the random gain  are all
positive, or all negative} (Dutch Book). 
\begin{remark}\label{REM:CONST}
Given a  conditional random quantity $X|H$ and a prevision assessment $\prev(X|H)=\mu$, if conditionally on $H$ being true  $X$ is constant, say $X=c$, then by coherence $\mu=c$.
\end{remark}
\emph{Probabilistic consistency and probabilistic entailment.}
We recall the notions of p-consistency and p-entailment of Adams
	(\cite{adams75}) formulated for conditional events in
	the setting of coherence in 
	 \cite{gilio13}  (see also \cite{biazzo05,gilio02,GiSa13IJAR}). 
	 For a discussion on deduction from uncertain premises and p-validity, under coherence, see \cite{Cruz20}.
	\begin{definition}
		\label{PC}
		Let $\mathcal{F}_{n} = \{E_{i}|H_{i} \, , \; i=1,\ldots ,n\}$ be a
		family of $n$ conditional events. Then, $\mathcal{F}_{n}$ is \emph{p-consistent}
		if and only if the probability assessment $(p_{1},p_{2},\ldots ,
		p_{n})=(1,1,\ldots ,1)$ on $\mathcal{F}_{n}$ is coherent.
	\end{definition}
	\begin{definition}
		\label{PE}
		A p-consistent family $\mathcal{F}_{n} = \{E_{i}|H_{i} \, , \; i=1,\ldots ,n\}$
		 \emph{p-entails} a conditional event $E|H$  (denoted by $\mathcal{F}
		_{n} \; \Rightarrow_{p} \; E|H$) if and only if for any coherent probability
		assessment $(p_{1},\ldots ,
				p_{n},z)$ on $\mathcal{F}_{n} \cup \{E|H
		\}$ it holds that: if $p_{1}=\cdots =p_{n}=1$, then $z=1$.
	\end{definition}
The inference from $\mathcal{F}_{n}$ to $E|H$ is p-valid if and only if
$\mathcal{F}_{n}  \Rightarrow_{p}  E|H$ (\cite{adams75}).\\
\emph{Logical operations among conditional events.}
We recall below the notion of conjunction of two conditional events (\cite{GiSa14}).
\begin{definition}\label{CONJUNCTION} Given any pair of conditional events $E_1|H_1$ and $E_2|H_2$, with $P(E_1|H_1)=x_1$ and $P(E_2|H_2)=x_2$, their conjunction $(E_1|H_1) \wedge (E_2|H_2)$ is the conditional random quantity defined as
	\begin{equation}\label{EQ:CONJUNCTION}
	\begin{array}{lll}
	(E_1|H_1) \wedge (E_2|H_2)&=&(E_1H_1E_2H_2+x_{1}\no{H}_1E_2H_2+x_{2}\no{H}_2E_1H_1)|(H_1\vee H_2) =\\
	&=&\left\{\begin{array}{ll}
	1, &\mbox{if $E_1H_1E_2H_2$ is true,}\\
	0, &\mbox{if $\no{E}_1H_1\vee \no{E}_2H_2$ is true,}\\
	x_1, &\mbox{if $\no{H}_1E_2H_2$ is true,}\\
	x_2, &\mbox{if $\no{H}_2E_1H_1$ is true,}\\
	x_{12}, &\mbox{if $\no{H}_1\no{H}_2$ is true},
	\end{array}
	\right.
	\end{array}
	\end{equation}		
	where $x_{12}=\prev[(E_1|H_1) \wedge (E_2|H_2)]=\prev[(E_1H_1E_2H_2+x_{1}\no{H}_1E_2H_2+x_{2}\no{H}_2E_1H_1)|(H_1\vee H_2)]$.
\end{definition}
In betting terms, the prevision $x_{12}$ represents the amount you agree to pay, with the proviso that you will receive the quantity $E_1H_1E_2H_2+x_{1}\no{H}_1E_2H_2+x_{2}\no{H}_2E_1H_1$, or you will receive back the quantity $x_{12}$, according to whether $H_1\vee H_2$ is true, or  $\no{H}_1\no{H}_2$ is true.
Notice that, differently from conditional events which are three-valued objects, the conjunction $(E_1|H_1) \wedge (E_2|H_2)$
is no longer a three-valued object, but  a five-valued object with values in $[0,1]$. 
We recall below the  notion of conjunction  of $n$ conditional events.
\begin{definition}\label{DEF:CONGn}	
	Let  $n$ conditional events $E_1|H_1,\ldots,E_n|H_n$ be given.
	For each  non-empty strict subset $S$  of $\{1,\ldots,n\}$,  let $x_{S}$ be a prevision assessment on $\bigwedge_{i\in S} (E_i|H_i)$.
	Then, the conjunction  $(E_1|H_1) \wedge \cdots \wedge (E_n|H_n)$ is the conditional random quantity $\C_{1\cdots n}$ defined as
	\begin{equation}\label{EQ:CF}
	\begin{array}{lll}
	\C_{1\cdots n}=
	[\bigwedge_{i=1}^n E_iH_i+\sum_{\emptyset \neq S\subset \{1,2\ldots,n\}}x_{S}(\bigwedge_{i\in S} \no{H}_i)\wedge(\bigwedge_{i\notin S} E_i{H}_i)]|(\bigvee_{i=1}^n H_i)=
	\\
	=\left\{
	\begin{array}{llll}
	1, &\mbox{ if } \bigwedge_{i=1}^n E_iH_i\, \mbox{ is true,} \\
	0, &\mbox{ if } \bigvee_{i=1}^n \no{E}_iH_i\, \mbox{ is true}, \\
	x_{S}, &\mbox{ if } (\bigwedge_{i\in S} \no{H}_i)\wedge(\bigwedge_{i\notin S} E_i{H}_i)\, \mbox{ is true}, \; \emptyset \neq S\subset \{1,2\ldots,n\},\\
	x_{1\cdots n}, &\mbox{ if } \bigwedge_{i=1}^n \no{H}_i \mbox{ is true},
	\end{array}
	\right.
	\end{array}
	\end{equation}
	where 
	\begin{equation}\label{EQ:PREVCN}
	\begin{array}{ll}
	x_{1\cdots n}=x_{\{1,\ldots, n\}}=\prev(\C_{1\cdots n})=\\
	=\prev[(\bigwedge_{i=1}^n E_iH_i+\sum_{\emptyset \neq S\subset \{1,2\ldots,n\}}x_{S}(\bigwedge_{i\in S} \no{H}_i)\wedge(\bigwedge_{i\notin S} E_i{H}_i))|(\bigvee_{i=1}^n H_i)].
	\end{array}
	\end{equation}
\end{definition}
For  $n=1$ we obtain $\C_1=E_1|H_1$.  In  Definition \ref{DEF:CONGn}  each possible value $x_S$ of $\C_{1\cdots n}$,  $\emptyset\neq  S\subset \{1,\ldots,n\}$, is evaluated  when defining (in a previous step) the conjunction $\C_{S}=\bigwedge_{i\in S} (E_i|H_i)$. 
Then, after the conditional prevision $x_{1\cdots n}$ is evaluated, $\C_{1\cdots n}$ is completely specified. Of course, 
we require coherence for  the prevision assessment $(x_{S}, \emptyset\neq  S\subseteq \{1,\ldots,n\})$, so that $\C_{1\cdots n}\in[0,1]$.
In the framework of the betting scheme, $x_{1\cdots n}$ is the amount that you agree to pay with the proviso that you will receive:\\
- the amount $1$, if all conditional events are true;\\
- the amount  $0$, if at least one of the conditional events is false; \\
- the amount $x_S$ equal to the prevision of the conjunction of that conditional events which are void,  otherwise. In particular you receive back $x_{1\cdots n}$ when all  conditional events are void.\\
As we can see from (\ref{EQ:CF}), the conjunction $\C_{1\cdots n}$ is (in general) a $(2^n+1)$-valued object because the number of nonempty subsets $S$, and hence the number of possible values $x_S$,  is $2^n-1$. 
We recall a result which shows that the prevision of the  conjunction on $n$  conditional events satisfies the Fréchet-Hoeffding bounds (\cite[Theorem13]{GiSa19}).
\begin{theorem}\label{THM:TEOREMAAI13}
	Let  $n$ conditional events $E_1|H_1,\ldots,E_{n}|H_{n}$ be given, with $x_i=P(E_i|H_i)$, $i=1,\ldots, n$  and $x_{1\cdots n}=\prev(\C_{1 \cdots n })$. Then
\[
	\max\{x_1+\cdots+x_{n}-n+1,0\}
	\,\,\leq \,\, x_{1\cdots n} \,\,\leq\,\, \min\{x_1,\ldots,x_n\}.
	\]
\end{theorem}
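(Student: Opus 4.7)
The plan is to prove both bounds simultaneously by induction on $n$, reducing each to a pointwise (constituent-by-constituent) comparison of $\C_{1\cdots n}$ with an auxiliary random quantity whose prevision furnishes the candidate bound. The base case $n=1$ is trivial since $\C_1=E_1|H_1$ and $P(E_1|H_1)=x_1$.

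For the upper bound $x_{1\cdots n}\leq \min\{x_1,\ldots,x_n\}$, fix $i\in\{1,\ldots,n\}$ and examine $Y=(E_i|H_i)-\C_{1\cdots n}$ on the partition generated by $H_1,\ldots,H_n$ and $E_1,\ldots,E_n$. When $\bigwedge_{j=1}^n E_jH_j$ holds, $Y=0$; when some $\no{E}_jH_j$ is true, $\C_{1\cdots n}=0$ while $E_i|H_i\in\{0,1,x_i\}\subseteq[0,1]$; on a constituent where the conditioning events indexed by $\emptyset\neq S\subset\{1,\ldots,n\}$ are false and $E_jH_j$ is true for $j\notin S$, $\C_{1\cdots n}=x_S$ and one uses either the inductive hypothesis $x_S\leq x_i$ (when $i\in S$, giving $Y=x_i-x_S\geq 0$) or $E_i|H_i=1\geq x_S$ (when $i\notin S$, giving $Y=1-x_S\geq 0$). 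Hence $Y\geq 0$ on every constituent of $\bigvee_{j=1}^n H_j$, while on $\bigwedge_{j=1}^n \no{H}_j$ one has $Y=x_i-x_{1\cdots n}=\prev(Y)$. Splitting $\prev(Y)=\prev[Y(\bigvee_{j=1}^n H_j)]+(x_i-x_{1\cdots n})P(\bigwedge_{j=1}^n\no{H}_j)$ and rearranging yields $(x_i-x_{1\cdots n})P(\bigvee_{j=1}^n H_j)\geq 0$, which gives $x_{1\cdots n}\leq x_i$ whenever $P(\bigvee_{j=1}^n H_j)>0$.

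For the lower bound, $x_{1\cdots n}\geq 0$ follows because $\C_{1\cdots n}$ is coherently assessed to lie in $[0,1]$. For the remaining inequality $x_{1\cdots n}\geq x_1+\cdots+x_n-n+1$, the same recipe applies to $W=\C_{1\cdots n}-(\sum_{j=1}^n E_j|H_j-n+1)$: on $\bigwedge_{j=1}^n E_jH_j$ one obtains $W=0$; on a constituent where some $\no{E}_jH_j$ is true the corresponding summand vanishes and the others lie in $[0,1]$, so $\sum_k E_k|H_k\leq n-1$ and $W\geq 0$; on the constituent with void set a proper non-empty $S$, the inductive lower bound $x_S\geq \sum_{k\in S}x_k-|S|+1$ is exactly what is needed to get $W=x_S-\sum_{k\in S}x_k+|S|-1\geq 0$. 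The identical averaging argument over $\bigvee_{j=1}^n H_j$ versus $\bigwedge_{j=1}^n\no{H}_j$ then closes the inequality.

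The main obstacle, and the only subtle point in both arguments, is the ``all conditioning events false'' constituent $\bigwedge_{j=1}^n \no{H}_j$, on which $Y$ and $W$ evaluate precisely to the prevision gaps one is trying to sign; the argument is unlocked only by the linearity of $\prev$ across $\bigvee_{j=1}^n H_j$ and $\bigwedge_{j=1}^n \no{H}_j$ together with the positivity of $P(\bigvee_{j=1}^n H_j)$. The degenerate case $P(\bigvee_{j=1}^n H_j)=0$ is handled separately by observing that any $x_{1\cdots n}\in[0,1]$ is then coherent, so the bounds hold vacuously. A secondary point to verify at each inductive step is that the coherence requirement on $(x_S:\emptyset\neq S\subseteq\{1,\ldots,n\})$ built into Definition~\ref{DEF:CONGn} makes the inductive bounds on each $x_S$ with $|S|<n$ legitimately available.
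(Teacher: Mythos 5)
The paper does not prove this statement; it recalls it from \cite[Theorem~13]{GiSa19}, so there is no internal proof to compare against. On its own merits, the core of your argument is sound: the induction on $n$, the constituent-by-constituent verification that $(E_i|H_i)-\C_{1\cdots n}\geq 0$ and $\C_{1\cdots n}-\bigl(\sum_j E_j|H_j-n+1\bigr)\geq 0$ on every constituent contained in $H_1\vee\cdots\vee H_n$, and the use of the inductive bounds on the $x_S$ with $|S|<n$ (which are indeed available, since Definition~\ref{DEF:CONGn} requires the whole assessment $(x_S)$ to be coherent) are all correct.

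The genuine gap is your treatment of the case $P(H_1\vee\cdots\vee H_n)=0$. First, the claim that ``any $x_{1\cdots n}\in[0,1]$ is then coherent'' is false in this framework: coherence is a condition on the logically possible values of the random gain after discarding only the called-off cases, so a conditioning event of zero probability (but nonempty) still constrains the conditional assessment --- exactly as $P(E_1E_2|H)\leq P(E_1|H)$ is forced even when $P(H)=0$. Second, even granting the claim, the conclusion would be backwards: if every value in $[0,1]$ were coherent, the upper bound $x_{1\cdots n}\leq\min\{x_1,\ldots,x_n\}$ would be \emph{violated} for $\min_i x_i<1$, not ``vacuously satisfied''; a theorem asserting bounds on all coherent assessments cannot be rescued by enlarging the set of coherent assessments. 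The repair is to abandon the prevision-splitting step (which needs $P(\bigvee_j H_j)>0$ to divide by) and instead invoke the necessary condition of coherence that the paper itself uses in the proofs of Theorems~\ref{THM:CANITER} and~\ref{THM:CDATOC}: the paid amount must lie in the convex hull of the values of the random gain on the non-called-off constituents. Concretely, betting $+1$ on $E_i|H_i$ and $-1$ on $\C_{1\cdots n}$ gives, on $H_1\vee\cdots\vee H_n$, the gain $Y-(x_i-x_{1\cdots n})$; if $x_{1\cdots n}>x_i$ this gain is everywhere positive on the non-called-off cases, a Dutch book, with no positivity assumption on $P(\bigvee_j H_j)$ needed. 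The same adjustment closes the lower-bound argument. With that replacement your proof goes through.
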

In \cite[Theorem 10]{GiSa21IJAR} we have shown, under logical independence, the sharpness of the Fréchet-Hoeffding bounds.
\begin{remark}\label{REM:CONGCONG}
Given a finite family  $\F$ of  conditional events,  their conjunction is also  denoted by $\C(\F)$. We recall that in \cite{GiSa19}, given  two finite families of conditional events $\F_1$ and $\F_2$,  the object $\C(\F_1) \wedge  \C(\F_2)$ is 
 defined as $\C(\F_1\cup \F_2)$. Then, conjunction satisfies the commutativity and associativity properties (\cite[Propositions 1 and 2]{GiSa19}). Moreover,  the operation of conjunction satisfies the monotonicity property  (\cite[Theorem7]{GiSa19}), that is
$
\C_{1\cdots n+1}\leq \C_{1\cdots n}.$
Then,
 \begin{equation}\label{EQ:MONOTGEN}
\C(\mathcal{F}_1\cup \mathcal{F}_2)\leq \C(\mathcal{F}_1),\;\;\C(\mathcal{F}_1\cup \mathcal{F}_2)\leq \C(\mathcal{F}_2).     
 \end{equation}
\end{remark}
\emph{Iterated conditioning.}
We now recall the notion of iterated conditional given in \cite{GiSa13a}. 
Such notion has the structure 
\def\mcirc{\mathbin{\scalerel*{\bigcirc}{t}}}
 $ \Box | \mcirc= \Box\wedge \mcirc +\prev(\Box|\mcirc)\no{\mcirc}$, 
 where $\prev$ denotes the prevision,
which reduces to formula (\ref{EQ:AgH})  when $\Box=A$ and $\mcirc=H$.
\begin{definition}[Iterated conditioning]
	\label{DEF:ITER-COND} Given any pair of conditional events $E_1|H_1$ and $E_2|H_2$, with $E_1H_1\neq \emptyset$, the iterated
	conditional $(E_2|H_2)|(E_1|H_1)$ is defined as the conditional random quantity
	\begin{equation}\label{EQ:ITER-COND}
	(E_2|H_2)|(E_1|H_1) = (E_2|H_2) \wedge (E_1|H_1) + \mu \no{E}_1|H_1,
	\end{equation}
	where
	$\mu =\mathbb{P}[(E_2|H_2)|(E_1|H_1)]$.
\end{definition}
\begin{remark}\label{REM:A|H=0}
	Notice that we assumed that $E_1H_1\neq \emptyset$ to give a nontrivial meaning to the notion of iterated conditional. Indeed,   if  $E_1H_1$ were equal to $\emptyset$,   then  $E_1|H_1=(E_2|H_2) \wedge (E_1|H_1)=0$ and  $\no{E}_1|H_1 =1$, from which it  would follow  
	$(E_2|H_2)|(E_1|H_1)=(E_2|H_2)|0=(E_2|H_2) \wedge (E_1|H_1) + \mu \no{E}_1|H_1=\mu$; that is, $(E_2|H_2)|(E_1|H_1)$ 
	would coincide with the (indeterminate) value $\mu$. Similarly to the case of a conditional event $E|H$, which is of no interest when $H=\emptyset$, the iterated conditional $(E_2|H_2)|(E_1|H_1)$ is not considered  in our approach when $E_1H_1=\emptyset$.
\end{remark}
Definition \ref{DEF:ITER-COND} has been generalized in \cite{GiSa19} to the case where the antecedent is the conjunction of more than two conditional events.
 \begin{definition}\label{DEF:GENITER}
	Let be given  $n+1$ conditional events $E_1|H_1, \ldots, E_{n+1}|H_{n+1}$, with $(E_1|H_1) \wedge \cdots \wedge (E_n|H_n)\neq 0$. We denote by $(E_{n+1}|H_{n+1})|((E_1|H_1) \wedge \cdots \wedge (E_n|H_n))=(E_{n+1}|H_{n+1})|\C_{1\cdots n}$ the random quantity
	\[
	\begin{array}{ll}
		(E_1|H_1) \wedge \cdots \wedge (E_{n+1}|H_{n+1})  + \mu \, (1-(E_1|H_1) \wedge \cdots \wedge (E_n|H_n))= \\
		=\C_{1\cdots n+1}+\mu \, (1-\C_{1\cdots n}),
	\end{array}
	\]
	where $\mu = \prev[(E_{n+1}|H_{n+1})|\C_{1\cdots n}]$.
\end{definition}
We observe that, based on the betting metaphor,  the quantity $\mu$ is the amount to be paid in order to receive the amount $\C_{1\cdots n+1}+\mu \, (1-\C_{1\cdots n})$. Definition \ref{DEF:GENITER} generalizes the notion  of iterated conditional $(E_{2}|H_{2})|(E_1|H_1)$ given in 
previous papers (see, e.g., \cite{GiSa13c,GiSa13a,GiSa14}). 
We also observe that, defining 	$\prev(\C_{1\cdots n})=x_{1\cdots n}$ and 
$\prev(\C_{1\cdots n+1})=x_{1\cdots n+1}$, by the linearity of prevision it holds that $\mu=x_{1\cdots n+1}+\mu \, (1-x_{1\cdots n})$; then, $x_{1\cdots n+1}=\mu \, x_{1\cdots n}$, that is 
$\prev(\C_{1\cdots n+1})=\prev[(E_{n+1}|H_{n+1})|\C_{1\cdots n}]\prev(\C_{1\cdots n})$. \\
\emph{Characterization of p-consistency and p-entailment.}
We recall a characterization of p-consistency of a family $\F$ in terms of the coherence of the prevision assessment  $\prev[\C(\F)]=1$ (\cite[Theorem 17]{GiSa19}).
\begin{theorem}\label{THM:PCC}
	A  family of $n$ conditional events $\F=\{E_1|H_1,\ldots, E_n|H_n\}$ is p-consistent if and only if the prevision assessment  $\prev[\C(\F)]=1$ is coherent.
\end{theorem}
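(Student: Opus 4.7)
The plan is to prove the two directions separately: the reverse direction uses the monotonicity of conjunction, while the forward direction combines the Fr\'echet--Hoeffding bounds of Theorem \ref{THM:TEOREMAAI13} with the standard extension theorem for coherent previsions.

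For the implication ``$\prev[\C(\F)]=1$ coherent $\Rightarrow$ $\F$ p-consistent'', I apply the monotonicity property (\ref{EQ:MONOTGEN}) with $\F_1=\{E_i|H_i\}$ and $\F_2=\F\setminus\{E_i|H_i\}$ to obtain the pointwise bound $\C(\F)\leq E_i|H_i$ for each $i$. Since $E_i|H_i\in[0,1]$, passing to previsions yields
\[
1 \;=\; \prev[\C(\F)] \;\leq\; P(E_i|H_i) \;\leq\; 1,
\]
so $P(E_i|H_i)=1$ for every $i$. Hence $(1,\ldots,1)$ on $\F$ is the restriction of a coherent assessment and, by the restriction property, is itself coherent, giving p-consistency.

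For the converse, starting from the coherent assessment $(1,\ldots,1)$ on $\F$, I extend it coherently to include all sub-conjunctions $\{\C_S : \emptyset\neq S\subseteq\{1,\ldots,n\}\}$ via the fundamental theorem on coherent conditional prevision. For each such $S$, the restriction to $\F_S=\{E_i|H_i:i\in S\}$ is again $(1,\ldots,1)$, so Theorem \ref{THM:TEOREMAAI13} applied to $\C_S$ gives
\[
1 \;=\; \max\bigl\{\sum_{i\in S}x_i-|S|+1,\,0\bigr\} \;\leq\; x_S \;\leq\; \min_{i\in S}x_i \;=\; 1,
\]
forcing $x_S=1$ for every such $S$; in particular $x_{1\cdots n}=\prev[\C(\F)]=1$. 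The main subtlety lies in this direction: Theorem \ref{THM:TEOREMAAI13} by itself gives only necessary bounds, so the extension theorem is needed to guarantee that a coherent assessment on the full collection of sub-conjunctions actually exists; once existence is secured, the Fr\'echet--Hoeffding bounds collapse to the single value $1$ and the conclusion is immediate.
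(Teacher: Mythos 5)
Your argument is sound. Note that the paper does not actually prove Theorem \ref{THM:PCC}: it is recalled from \cite{GiSa19} (Theorem 17 there) without proof, so there is no in-paper argument to compare against. Your two directions use exactly the ingredients the paper itself recalls: monotonicity (\ref{EQ:MONOTGEN}) giving $\C(\F)\leq E_i|H_i$ and hence $x_i=1$ for the ``coherent $\Rightarrow$ p-consistent'' direction (equivalently, one could invoke the upper Fr\'echet--Hoeffding bound $x_{1\cdots n}\leq\min_i x_i$ directly), and the bounds of Theorem \ref{THM:TEOREMAAI13} collapsing to $1$ for the converse. The one point deserving care is the appeal to ``the fundamental theorem'' in the converse: the objects $\C_S$ are defined recursively, with the values $x_{S'}$ for $S'\subset S$ entering as possible values of $\C_S$, so the extension must be performed stepwise (first all $|S|=2$, then $|S|=3$, and so on), and the existence of coherent extensions in this recursive setting is itself a result established in \cite{GiSa19} rather than the classical extension theorem for previsions of fixed random quantities. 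You correctly identify this as the main subtlety, so I regard the proof as complete modulo that citation.
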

We recall a characterization of  p-entailment in terms of  suitable  conjunctions. (\cite[Theorem 18]{GiSa19}).
\begin{theorem}\label{THM:PENT}
	Let be given a p-consistent family of $n$ conditional events $\F=\{E_1|H_1,\ldots, E_n|H_n\}$ and a further conditional event $E_{n+1}|H_{n+1}$. Then, the following assertions are equivalent:\\
	(i) $\F$ p-entails $E_{n+1}|H_{n+1}$;\\
	(ii) the conjunction $(E_1|H_1)\wedge\cdots \wedge(E_n|H_n)\wedge (E_{n+1}|H_{n+1})$ coincides with
	the  conjunction $(E_1|H_1)\wedge\cdots \wedge(E_n|H_n)$;\\
	(iii) the inequality  $(E_1|H_1)\wedge\cdots \wedge(E_n|H_n)\leq \; E_{n+1}|H_{n+1}$ is satisfied.
\end{theorem}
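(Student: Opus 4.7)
The plan is to prove the three-way equivalence by establishing (ii)$\Leftrightarrow$(iii) at the algebraic level of conditional random quantities, and then (i)$\Leftrightarrow$(ii) by invoking Theorem~\ref{THM:PCC}. The three essential tools will be the monotonicity of conjunction recalled in Remark~\ref{REM:CONGCONG}, the upper Fr\'echet--Hoeffding bound of Theorem~\ref{THM:TEOREMAAI13}, and Definition~\ref{PE} of p-entailment.

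For (ii)$\Rightarrow$(iii), monotonicity applied to $\F\cup\{E_{n+1}|H_{n+1}\}$ gives $\C_{1\cdots n+1}\leq E_{n+1}|H_{n+1}$, which together with (ii) yields (iii) immediately. For the reverse direction (iii)$\Rightarrow$(ii), I would argue by induction on $n$, using as the base case the binary identity ``$X\leq Y$ implies $X\wedge Y=X$'' for two conditional random quantities in $[0,1]$; the inductive step uses the structural identity $\C_{1\cdots n+1}=\C_{1\cdots n}\wedge(E_{n+1}|H_{n+1})$ together with a case analysis of the $2^{n+1}+1$ possible values described by Definition~\ref{DEF:CONGn}, propagating the equality through the auxiliary sub-previsions $x_S$.

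For (i)$\Leftrightarrow$(ii): assume (i). Then by Definition~\ref{PE} the assessment $(1,\ldots,1,1)$ is coherent on $\F\cup\{E_{n+1}|H_{n+1}\}$, so Theorem~\ref{THM:PCC} gives $\prev[\C_{1\cdots n+1}]=1$, and p-consistency of $\F$ gives $\prev[\C_{1\cdots n}]=1$ as well. Since $\C_{1\cdots n+1}\leq\C_{1\cdots n}\leq 1$ and both previsions equal $1$, a standard coherence argument (any $[0,1]$-valued random quantity with unit prevision must coincide with $1$ on every outcome compatible with the coherent assessment) forces the two random quantities to coincide, which is (ii). Conversely, suppose (ii) and let $(1,\ldots,1,z)$ be any coherent extension on $\F\cup\{E_{n+1}|H_{n+1}\}$. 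By Theorem~\ref{THM:PCC}, $\prev[\C_{1\cdots n}]=1$, hence by (ii), $\prev[\C_{1\cdots n+1}]=1$; the upper Fr\'echet--Hoeffding bound then forces $z=1$, which is (i).

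The hard step is (iii)$\Rightarrow$(ii): one must upgrade a pointwise inequality between a conjunction and a single conditional event into coincidence of two conjunctions, even though each conjunction is defined only modulo a coherent choice of several auxiliary previsions $x_S$, $\emptyset\neq S\subset\{1,\ldots,n+1\}$. The induction I propose reduces the whole argument to the binary identity ``$X\leq Y$ implies $X\wedge Y=X$'', but verifying that identity still requires a careful case analysis using the five-valued representation in Definition~\ref{CONJUNCTION}, with the delicate feature that the single auxiliary prevision $x_{12}$ must be chosen coherently so as to preserve the equality on the fully void atom $\no{H}_1\no{H}_2$.
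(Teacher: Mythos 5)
The paper itself does not prove Theorem~\ref{THM:PENT}; it only recalls it from \cite{GiSa19} (Theorem 18 there), so your proposal has to stand on its own, and it does not yet. The most serious problem is your (i)$\Rightarrow$(ii). The ``standard coherence argument'' you invoke --- that a $[0,1]$-valued conditional random quantity with unit prevision must equal $1$ on every outcome --- is false in this setting: $P(A|H)=1$ can be coherent while $A|H$ takes the value $0$ on a nonempty $\no{A}H$ (coherence does not exclude constituents of probability zero, so $\prev(X)=\prev(Y)$ together with $X\leq Y$ does not force $X=Y$). Moreover, assertion (ii) is an identity between two random quantities that must hold for \emph{every} coherent assessment of the auxiliary previsions $x_S$ --- that is how it is used in the proof of Theorem~\ref{THM:PEITER}, and how its failure is exhibited in the Transitivity example --- whereas your argument inspects only the single extreme assessment $(1,\ldots,1,1)$. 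So this implication is not established.

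Your (iii)$\Rightarrow$(ii) is a plan rather than a proof, and the plan has a structural flaw: there is no conjunction of two arbitrary conditional random quantities in this framework, so the ``binary identity $X\leq Y\Rightarrow X\wedge Y=X$'' cannot serve as a base case for an induction. The only available reading, via $\C(\F_1)\wedge\C(\F_2):=\C(\F_1\cup\F_2)$, makes that identity with $X=\C_{1\cdots n}$ and $Y=E_{n+1}|H_{n+1}$ literally identical to the statement (iii)$\Rightarrow$(ii) you are trying to prove, so the induction on $n$ reduces nothing. What is needed instead is a direct constituent-by-constituent comparison of $\C_{1\cdots n+1}$ with $\C_{1\cdots n}$ on $H_1\vee\cdots\vee H_{n+1}$ (using the pointwise hypothesis to show the two agree there), followed by the coherence principle that two conditional random quantities coinciding whenever the disjunction of their conditioning events is true must have equal previsions, which settles the remaining constituent $\no{H}_1\cdots\no{H}_{n+1}$; you gesture at this for $n=1$ but never carry it out. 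Finally, a small misattribution: in (ii)$\Rightarrow$(i) the fact that $x_1=\cdots=x_n=1$ forces $x_{1\cdots n}=1$ comes from the lower Fr\'echet--Hoeffding bound of Theorem~\ref{THM:TEOREMAAI13}, not from Theorem~\ref{THM:PCC}, which only says that the assessment $\prev[\C(\F)]=1$ is \emph{coherent}, not that it is forced. The sound parts of your proposal are (ii)$\Rightarrow$(iii) via monotonicity and the Fr\'echet--Hoeffding step giving $z=1$.
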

We recall a  result where it is shown that the p-entailment of a conditional event $E_3|H_3$ from a p-consistent family $\F=\{E_1|H_1, E_2|H_2\}$ is equivalent to condition  $(E_3|H_3)|((E_1|H_1)\wedge(E_2|H_2))=1$  (\cite[Theorem 8]{GiPS20}).
\begin{theorem}\label{THM:MAIN}
	Let  three conditional events $E_1|H_1$, $E_2|H_2$, and $E_3|H_3$ be given, where $\{E_1|H_1, E_2|H_2\}$ is p-consistent.
	Then, $\{E_1|H_1, E_2|H_2\}$ p-entails $E_3|H_3$ if and only if
	$(E_3|H_3)|((E_1|H_1)\wedge(E_2|H_2)) = 1$.
\end{theorem}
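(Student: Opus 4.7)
The strategy is to unfold the iterated conditional via Definition \ref{DEF:GENITER} (taken at $n=2$) and reduce the biconditional to the conjunction-level characterization of p-entailment supplied by Theorem \ref{THM:PENT}. Set $\C_{12}=(E_1|H_1)\wedge(E_2|H_2)$ and $\C_{123}=\C_{12}\wedge(E_3|H_3)$. Definition \ref{DEF:GENITER} then gives
\[
(E_3|H_3)|\C_{12}\;=\;\C_{123}+\mu(1-\C_{12}),\qquad \mu=\prev[(E_3|H_3)|\C_{12}].
\]
By Theorem \ref{THM:PENT}, $\{E_1|H_1,E_2|H_2\}\Rightarrow_p E_3|H_3$ is equivalent to $\C_{123}=\C_{12}$ as random quantities, so the theorem reduces to showing that $(E_3|H_3)|\C_{12}$ is (identically) the constant $1$ precisely when $\C_{123}=\C_{12}$.

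For the forward direction, I would assume p-entailment. Theorem \ref{THM:PENT} provides $\C_{123}=\C_{12}$, so the iterated-conditional formula collapses to $\mu+(1-\mu)\C_{12}$. Plugging $\mu=1$ makes this identically $1$; by Remark \ref{REM:CONST} the prevision of a constant random quantity coincides with that constant, so $\mu=1$ is a coherent, self-consistent value and the iterated conditional equals $1$. An alternative route uses the product rule $\prev(\C_{123})=\mu\,\prev(\C_{12})$ obtained by taking previsions in the defining relation: by Theorem \ref{THM:PCC} the p-consistency of $\F$ makes $\prev(\C_{12})=1$ coherent; p-entailment then propagates $\prev(\C_{123})=1$, forcing $\mu=1$.

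For the backward direction, I would assume $(E_3|H_3)|\C_{12}=1$ identically. Remark \ref{REM:CONST} yields $\mu=\prev[(E_3|H_3)|\C_{12}]=1$. Substituting this back into the defining identity gives $1=\C_{123}+(1-\C_{12})$, hence $\C_{123}=\C_{12}$, and Theorem \ref{THM:PENT} delivers $\F\Rightarrow_p E_3|H_3$.

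The main delicacy is the self-referential role of $\mu$: it is simultaneously the prevision of the iterated conditional and a parameter appearing inside its random-quantity expression. Both directions close this loop through Remark \ref{REM:CONST}, which identifies $\mu$ with the constant value on the right-hand side of the iterated-conditional identity. Beyond that, the argument is purely algebraic once the two invoked results, Theorems \ref{THM:PENT} and \ref{THM:PCC}, are in hand.
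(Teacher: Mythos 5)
Your overall reduction (unfold the iterated conditional, invoke Theorem \ref{THM:PENT} to trade p-entailment for $\C_{123}=\C_{12}$) matches the skeleton of the paper's argument, which proves this statement in the generalized form of Theorem \ref{THM:PEITER}. Your backward direction is correct and even a little more direct than the paper's: from the pointwise identity $1=\C_{123}+\mu(1-\C_{12})$ with $\mu=1$ you read off $\C_{123}=\C_{12}$ and apply Theorem \ref{THM:PENT}, whereas the paper passes through previsions and the Fr\'echet--Hoeffding bounds to get $x_3=1$.

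The forward direction, however, has a genuine gap. After reducing to $\C_{123}=\C_{12}$, you must show that the \emph{unique} coherent value of $\mu=\prev[(E_3|H_3)|\C_{12}]$ is $1$; instead you only check that $\mu=1$ is a self-consistent choice (it makes $\mu+(1-\mu)\C_{12}$ identically $1$, and Remark \ref{REM:CONST} then confirms the prevision is $1$). That remark runs only one way: it does not exclude other coherent assessments $\mu\neq 1$, under which $\mu+(1-\mu)\C_{12}$ is not the constant $1$ and the claimed identity fails. Your product-rule fallback $\prev(\C_{123})=\mu\,\prev(\C_{12})$ settles the matter only when $\prev(\C_{12})>0$; but p-consistency merely guarantees that $\prev(\C_{12})=1$ is \emph{one} coherent assessment, and there are coherent assessments with $x_{12}=\prev(\C_{12})=0$ while $\C_{12}$ is not the constant $0$ (e.g.\ logically independent events with $x_1=x_2=0$), for which the equation reads $0=0$ and linearity of prevision leaves $\mu$ undetermined. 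This residual case is exactly what case $(b)$ in the paper's proof of Theorem \ref{THM:CDATOC} handles: by coherence $\mu$ must lie in the convex hull of the values attained where the bet is not called off, namely $\{1\}\cup\{x_S+\mu(1-x_S):0<x_S<1\}$; each such value equals $\mu+x_S(1-\mu)$ with $x_S>0$, so any $\mu<1$ lies strictly below all of them and any $\mu>1$ strictly above, forcing $\mu=1$. The paper's chain $(E_3|H_3)|\C_{12}=\C_{123}|\C_{12}=\C_{12}|\C_{12}=1$, via Theorems \ref{THM:CANITER} and \ref{THM:CDATOC}, packages precisely this Dutch-book argument; your proof needs it (or an equivalent) to close the forward direction.
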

Theorem \ref{THM:MAIN} will be generalized in Section \ref{SEC:4}.
\section{A General Notion of Iterated Conditional}\label{SEC:3}
Let a family  $\F=\{E_1|H_1,\ldots,E_{n}|H_{n}\}$  of $n$ conditional events be given. Moreover, let   $\M=(x_{ S}:\emptyset \neq S \subseteq\{1,\ldots,n\})$ be a coherent prevision assessment  on the family
	$\{\C_{ S}:\emptyset \neq S \subseteq\{1,\ldots,n\}\}$, where $\C_{ S}=\bigwedge_{i\in S}(E_i|H_i)$ and $x_S=\prev(\C_{ S})$. Denoting by  $\Lambda$ the set of possible values of $\C(\F)$, it holds that  $\Lambda\subseteq  \{1,0,x_{S}:\emptyset\neq S\subseteq\{1,\ldots,n\}\}$. We observe that if $x_{S}=0$ for some $S$, then from (\ref{EQ:MONOTGEN}), it holds that $x_{S'}=0$ for every $S'$ such that $S\subset S'$. The conjunction $\C(\F)$ is constant and coincides with 0 when $\Lambda=\{0\}$, in which case we write $\C(\F)=0$. This  happens when
	$E_1H_1\cdots E_nH_n=\emptyset$ and  for each $\emptyset\neq S\subseteq\{1,\ldots,n\}$ such that 
	$ (\bigwedge_{i\in S} \no{H}_i)\wedge(\bigwedge_{i\notin S} E_i{H}_i)\neq \emptyset $ it holds that $x_{S}=0$.
	For instance,  when $E_1H_1\cdots E_nH_n=\emptyset$ and $x_1=\cdots=x_n=0$, it holds that $x_S=0$ for every $S$; then, $\Lambda=\{0\}$ and $\C(\F)$ coincides with the constant 0. 
We give below a generalization of Definition \ref{DEF:GENITER}.
\begin{definition}\label{DEF:GENERITER}
	Let $\F_1$ and $\F_2$ be two finite families of conditional events, with $\C(\F_1)\neq  0$. We denote by $\C(\F_2)|\C(\F_1)$ the random quantity defined as 
	\[
\C(\F_2)|\C(\F_1) =	\C(\F_2)\wedge\C(\F_1) + \mu (1-\C(\F_1)) 
		=\C(\F_1 \cup \F_2)+\mu (1-\C(\F_1)),
\]
where $\mu = \prev[\C(\F_2)|\C(\F_1)]$.
\end{definition}
We observe that Definition \ref{DEF:GENERITER} reduces to Definition \ref{DEF:GENITER} when  $\F_1 = \{E_1|H_1, \ldots, E_n|H_n\}$ and $\F_2=\{E_{n+1}|H_{n+1}\}$.
We also remark that  by linearity of prevision it holds that $\mu=\prev[\C(\F_2)|\C(\F_1)]=\prev[\C(\F_2)\wedge \C(\F_1)]+\mu (1-\prev[\C(\F_1)])$, that is
\begin{equation}\label{EQ:PREVCOMP}
\prev[\C(\F_2)\wedge \C(\F_1)]=\prev[\C(\F_2)|\C(\F_1)]\prev[\C(\F_1)].
\end{equation} Formula (\ref{EQ:PREVCOMP}) generalizes the well known relation: $P(A H)=P(A|H)P(H)$ ({\em compound probability theorem}). In the following result we obtain an equivalent representation of $\C(\F_2)|\C(\F_1)$.
\begin{theorem}\label{THM:CANITER}
	Let $\F_1$ and $\F_2$ be two finite families of conditional events, with $\C(\F_1)\neq 0$. It holds that 
\begin{equation}\label{EQ:CANITER}
\C(\F_2)|\C(\F_1)=
\C(\F_1\cup \F_2)|\C(\F_1)=(\C(\F_2)\wedge \C(\F_1))|\C(\F_1).
\end{equation}
\end{theorem}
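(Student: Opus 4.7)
My plan is to deduce (\ref{EQ:CANITER}) essentially by unwinding Definition \ref{DEF:GENERITER} in two different ways and matching up the resulting previsions via the compound-prevision identity (\ref{EQ:PREVCOMP}), exploiting the fact that $\F_1\cup(\F_1\cup\F_2)=\F_1\cup\F_2$.

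First, the rightmost equality is essentially tautological. By the commutativity of conjunction noted in Remark \ref{REM:CONGCONG} (and the fact that $\C(\G_1\cup\G_2)=\C(\G_1)\wedge\C(\G_2)$ for any finite families $\G_1,\G_2$), one has $\C(\F_2)\wedge\C(\F_1)=\C(\F_1)\wedge\C(\F_2)=\C(\F_1\cup\F_2)$, so $(\C(\F_2)\wedge\C(\F_1))|\C(\F_1)$ and $\C(\F_1\cup\F_2)|\C(\F_1)$ are the same object by definition. I would state this as a one-line observation.

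For the main equality $\C(\F_2)|\C(\F_1)=\C(\F_1\cup\F_2)|\C(\F_1)$, I apply Definition \ref{DEF:GENERITER} to both sides. Setting $\mu=\prev[\C(\F_2)|\C(\F_1)]$ and $\mu'=\prev[\C(\F_1\cup\F_2)|\C(\F_1)]$, the definition gives
\begin{equation*}
\C(\F_2)|\C(\F_1)=\C(\F_1\cup\F_2)+\mu\,(1-\C(\F_1)),
\end{equation*}
and, using $\F_1\cup(\F_1\cup\F_2)=\F_1\cup\F_2$ together with $\C(\F_1)\wedge\C(\F_1\cup\F_2)=\C(\F_1\cup\F_2)$,
\begin{equation*}
\C(\F_1\cup\F_2)|\C(\F_1)=\C(\F_1\cup\F_2)+\mu'\,(1-\C(\F_1)).
\end{equation*}
Thus the difference of the two sides is $(\mu-\mu')(1-\C(\F_1))$, and it suffices to show $\mu=\mu'$. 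Applying the compound-prevision identity (\ref{EQ:PREVCOMP}) to each iterated conditional gives $\mu\,\prev[\C(\F_1)]=\prev[\C(\F_1\cup\F_2)]=\mu'\,\prev[\C(\F_1)]$.

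When $\prev[\C(\F_1)]>0$ we can divide, obtaining $\mu=\mu'$ and hence (\ref{EQ:CANITER}). The main obstacle I anticipate is the degenerate case $\prev[\C(\F_1)]=0$: here equation (\ref{EQ:PREVCOMP}) collapses to $0=0$ and does not pin $\mu$ down directly. I would handle this by invoking coherence of the prevision assessment together with Remark \ref{REM:CONST}: when $\C(\F_1)$ is prevision-zero, in the conditioning framework the multiplier $1-\C(\F_1)$ behaves so that both $\mu$ and $\mu'$ are forced to coincide with the (unique coherent) value of the conditional prevision, and hence the equality between the two random quantities still holds. This is the step I would write out most carefully, since it is where the subtlety of conditioning on a non-trivial but prevision-null conjoined conditional actually lives.
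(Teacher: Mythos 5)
Your reduction of the statement to the single equality $\mu=\mu'$ of the two previsions is exactly the paper's first step, and your treatment of the rightmost equality via $\C(\F_2)\wedge\C(\F_1)=\C(\F_1\cup\F_2)$ matches the paper. In the case $\prev[\C(\F_1)]>0$ your route is genuinely different and simpler: you get $\mu\,\prev[\C(\F_1)]=\prev[\C(\F_1\cup\F_2)]=\mu'\,\prev[\C(\F_1)]$ from (\ref{EQ:PREVCOMP}) and divide. That part is correct.

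The gap is the case $\prev[\C(\F_1)]=0$, which you correctly flag but do not actually close, and the sketch you offer for it rests on a false premise. You claim that both $\mu$ and $\mu'$ are ``forced to coincide with the (unique coherent) value of the conditional prevision''; but when the antecedent has prevision zero the coherent prevision of an iterated conditional is in general \emph{not} unique. For instance, take $\F_1=\{E_1|H_1\}$ with $P(E_1|H_1)=0$ and $E_1H_1\neq\emptyset$: the non-called-off values of $(E_2|H_2)|(E_1|H_1)$ are those attained on $E_1H_1$, namely $1$, $0$, $x_2$, and any $\mu$ in their convex hull is coherent, so $\mu$ can be anything in $[0,1]$. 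Remark \ref{REM:CONST} does not help either, since $1-\C(\F_1)$ is not constant on the relevant conditioning event. What actually forces $\mu=\mu'$ is coherence of the \emph{joint} assessment on the two iterated conditionals: the paper bets on the difference, whose payoff is $(\mu-\mu')(1-\C(\F_1))$ against the paid amount $\mu-\mu'$, and observes that after discarding the called-off cases (those where $\C(\F_1)=0$) the possible payoffs are $0$ and $(\mu-\mu')(1-x_S)$ with $0<x_S<1$, all of modulus strictly less than $|\mu-\mu'|$ unless $\mu=\mu'$; so $\mu-\mu'$ lies in their convex hull only if $\mu-\mu'=0$. This single convex-hull argument covers both your cases uniformly and is the step your proposal is missing; without it (or an equivalent joint-coherence argument) the theorem is not proved when $\prev[\C(\F_1)]=0$.
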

\begin{proof}
We set $\mu'=\prev[\C(\F_2)|\C(\F_1)]$ and $\mu''=\prev[\C(\F_1\cup \F_2)|\C(\F_1)]$. Then,   
\[
\C(\F_2)|\C(\F_1)=
\C(\F_1\cup \F_2)+\mu'\,(1-\C(\F_1))
\]
and 
\[\C(\F_1\cup \F_2)|\C(\F_1)=
\C(\F_1\cup \F_2\cup \F_1)+\mu''\,(1-\C(\F_1))=\C(\F_1\cup \F_2)+\mu''\,(1-\C(\F_1)).
\]
In order to prove (\ref{EQ:CANITER}) it is enough to verify that $\mu'=\mu''$.
We observe that 
\[
\C(\F_2)|\C(\F_1)-\C(\F_1\cup \F_2)|\C(\F_1)=(\mu'-\mu'')(1-\C(\F_1)),
\]
where $\mu'-\mu''=
\prev[\C(\F_2)|\C(\F_1)-\C(\F_1\cup \F_2)|\C(\F_1)].$
Moreover, by setting $\F_1=\{E_1|H_1,\ldots,E_{n}|H_{n}\}$, it holds that 
\[
(\mu'-\mu'')(1-\C(\F_1))=
\left\{
\begin{array}{ll}
0, \mbox{ if } \C(\F_1)=1,\\
\mu'-\mu'', \mbox{ if } \C(\F_1)=0,\\
(\mu'-\mu'')(1-x_{S}), \mbox{ if } 0<\C(\F_1)=x_{S}<1,  
\end{array}
\right.
\]
where $\emptyset\neq S\subseteq\{1,\ldots,n\}$.
Within the betting framework, $\mu'-\mu''$ is the amount to be paid in order to receive the random amount $(\mu'-\mu'')(1-\C(\F_1))$. Then, as a necessary condition of coherence, $\mu'-\mu''$ must be a linear convex combination of the possible values of $(\mu'-\mu'')(1-\C(\F_1))$ associated with the cases where
the bet is not called off, that is the cases where you do not receive back the paid amount 
$\mu'-\mu''$. In other words, (as a necessary condition of coherence) $\mu'-\mu''$ must belong to the convex hull of the set $\{0,(\mu'-\mu'')(1-x_S):  0<x_{S}<1,\; \emptyset\neq S\subseteq\{1,\ldots,n\} \}$.
We observe that   $      \max\{0,|\mu'-\mu''|(1-x_S)\}\leq|\mu'-\mu''|$, where as $x_S\in(0,1)$ the equality holds if and only if $\mu'-\mu''=0$. Then,  $\mu'-\mu''$ belongs to convex hull of the set $\{0,(\mu'-\mu'')(1-x_S):  0<x_{S}<1,\; \emptyset\neq S\subseteq\{1,\ldots,n\} \}$ if and only if $\mu'-\mu''=0$, that is $\mu'=\mu''$. Thus,  $\C(\F_2)|\C(\F_1)=\C(\F_1\cup \F_2)|\C(\F_1)$.
Finally, by recalling  Remark \ref{REM:CONGCONG}, as $\C(\F_1\cup\F_2)=\C(\F_1)\wedge\C(\F_2)$, it follows that 
$\C(\F_2)|\C(\F_1)=(\C(\F_1)\wedge\C(\F_2))|\C(\F_1)$.
\qed
\end{proof}
In particular, given any family $\F=\{E_1|H_1,\ldots,E_n|H_n\}$, with $\C(\F)\neq 0$, and any conditional event $E|H$,   from  (\ref{EQ:CANITER}) it follows that 
\begin{equation}\label{EQ:CANITER1}
\C(\F\cup \{E|H\})|\C(\F)=(E|H)|\C(\F).
\end{equation}
 Moreover,  as $\F\cup \F=\F$, it holds that 
 \begin{equation}\label{EQ:CDATOC}
 \C(\F)|\C(\F)=\C(\F)+\mu(1-\C(\F)),
\end{equation}
 where $\mu=\prev[\C(\F)|\C(\F)]$. In the next theorem we show that $\C(\F)|\C(\F)=\mu=1$.
\begin{theorem}\label{THM:CDATOC}
Let $\F=\{E_1|H_1,\ldots, E_n|H_n\}$ be a  family of $n$ conditional events, with $\C(\F)$ not equal to the constant $0$. Then, $\C(\F)|\C(\F)$ coincides with the constant 1.
\end{theorem}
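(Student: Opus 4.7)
The plan is to use equation (\ref{EQ:CDATOC}), which writes $\C(\F)|\C(\F) = \C(\F) + \mu(1-\C(\F))$ with $\mu = \prev[\C(\F)|\C(\F)]$, and to reduce the claim to showing that the unique coherent value is $\mu = 1$; once this is pinned down, substitution yields $\C(\F)|\C(\F) = \C(\F) + (1-\C(\F)) = 1$ identically, as required.

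To identify $\mu$, I would enumerate the possible values of $\C(\F)|\C(\F)$ according to the value taken by $\C(\F)$. Denoting by $\Lambda \subseteq \{0,1,x_S : \emptyset \neq S \subseteq \{1,\ldots,n\}\}$ the set of possible values of $\C(\F)$, the expression equals $1$ when $\C(\F)=1$; it equals $\mu$ (this is the ``called-off'' payoff, where the bettor receives back exactly what was paid) when $\C(\F)=0$; and it equals $x_S + \mu(1-x_S) = \mu + x_S(1-\mu)$ when $\C(\F) = x_S$ for some $x_S \in (0,1)$. The hypothesis that $\C(\F)$ is not the constant $0$ ensures that at least one non-called-off outcome is realizable, so the set $\Lambda' = \{1\} \cup \{\mu + x_S(1-\mu) : x_S \in \Lambda \cap (0,1)\}$ is nonempty.

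The decisive step, modeled on the coherence argument in the proof of Theorem \ref{THM:CANITER}, is to observe that as a necessary condition of coherence $\mu$ must lie in the convex hull of the non-called-off values, i.e.\ of $\Lambda'$. I would then argue by contradiction: if $\mu<1$, every element of $\Lambda'$ strictly exceeds $\mu$, because $1>\mu$ and $x_S(1-\mu)>0$ for $x_S>0$; symmetrically, if $\mu>1$, every element of $\Lambda'$ is strictly below $\mu$. In either case the convex hull of $\Lambda'$ cannot contain $\mu$, contradicting coherence; hence $\mu=1$.

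The main obstacle is handling the called-off payoff cleanly, because $\mu$ itself appears among the possible values of $\C(\F)|\C(\F)$, and one cannot naively place $\mu$ in the convex hull of all its values. The strategy, borrowed from the preceding proof, is to discard exactly the $\C(\F)=0$ branch before invoking coherence; after this discard, the remaining values are uniformly displaced from $\mu$ in a single direction governed by $\mathrm{sign}(1-\mu)$, which forces $\mu = 1$.
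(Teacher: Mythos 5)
Your proof is correct and follows essentially the same route as the paper: write $\C(\F)|\C(\F)=\C(\F)+\mu(1-\C(\F))$, discard the called-off branch where $\C(\F)=0$, and invoke the coherence requirement that $\mu$ lie in the convex hull of the remaining realizable values $1$ and $\mu+x_S(1-\mu)$, all of which sit strictly on one side of $\mu$ unless $\mu=1$. The only difference is that the paper first splits on $x_{1\cdots n}>0$ versus $x_{1\cdots n}=0$, settling the former algebraically via $\mu\, x_{1\cdots n}=x_{1\cdots n}$, whereas your convex-hull argument handles both cases uniformly.
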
  
\begin{proof}
We set $C_0=\no{H}_1\cdots \no{H}_n$. We observe that when $C_0$ is true, the value of $\C(\F)$ is $x_{1\cdots n}$ and the value of $\C(\F)|\C(\F)$ is $x_{1\cdots n}+\mu(1-x_{1\cdots n})$.
By linearity of prevision, from (\ref{EQ:CDATOC}) it holds that $\mu=x_{1\cdots n}+\mu(1-x_{1\cdots n})$.

We denote by $K$ the set of constituents $C_h$'s generated by $\F$ such that $C_h\subseteq H_1\vee \cdots \vee H_n$, that is $C_{h}\neq C_0$. Then, we consider the partition $\{K_1,K_0,K^*\}$ of $K$ as defined below. \\
\[
\begin{array}{ll}
K_1=\{C_h: \mbox{ if } C_h \mbox{ is true, then the value of } \C(\F) \mbox{ is 1}\},
\\
K_0=\{C_h: \mbox{ if } C_h \mbox{ is true, then the value of } \C(\F) \mbox{ is 0}\},\\
K^*=\{C_h: \mbox{ if } C_h \mbox{ is true, then the value of } \C(\F) \mbox{ is positive and less than 1}\}.
\end{array}
\]
Notice that the set $K_1$  also includes the constituents $C_h$'s such that $\C(\F)=x_S$, with $x_S=1$.  The set
 $K_0$ also includes the constituents $C_h$'s such that $\C(\F)=x_S$, with $x_S=0$.  
 For each $C_h\in K^*$, it holds that   
 \[
 C_h=(\bigwedge_{i\in S} \no{H}_i)\wedge(\bigwedge_{i\notin S} E_i{H}_i), \mbox{ for a suitable }\emptyset\neq S\subseteq\{1,\ldots, n\};
 \]
 moreover, if $C_h$ is true, then  $\C(\F)=x_S$, with $0<x_S<1$. 
 We observe that $K_1\cup K^*\neq \emptyset$,  because we assumed that $\C(\F)$ does not coincide with the constant 0.
 Moreover, the value of $\C(\F)|\C(\F)$ associated with a constituent $C_h$ is 1, or $\mu$, or belongs to the set $\{x_{S}+\mu(1-x_S): 0<x_{S}<1, \emptyset\neq S\subseteq\{1,\ldots,n\} \}$, according to whether $C_h\in K_1$, or $C_h\in K_0$, or $C_h\in K^*$, respectively.

By linearity of prevision, based on  (\ref{EQ:CDATOC}), it holds that
\[
\mu = \pr[\C(\F)|\C(\F)] = \pr[\C(\F)] + \mu \pr[(1 - \C(\F)] = x_{1 \cdots n} + \mu (1-x_{1 \cdots n}) \,,
\]
from which it follows that $\mu \, x_{1 \cdots n} = x_{1 \cdots n}$.\\ 
We distinguish two cases: $(a)$ $x_{1 \cdots n}>0$; $(b)$  $x_{1 \cdots n}=0$.  \\
$(a)$. As $x_{1 \cdots n}>0$, it holds that $\mu = 1$ and hence $x_S + \mu(1-x_S)=1$, for every $S$; therefore $\C(\F)|\C(\F)$ coincides with the constant 1. \\
$(b)$. In this case $x_{1 \cdots n}=0$. If we bet on $\C(\F)|\C(\F)$, we agree to pay its prevision $\mu$ by receiving  $\C(\F)|\C(\F)=\C(\F)+\mu (1-\C(\F))$, with the bet called off when  you receive back the paid amount $\mu$  (whatever be $\mu$). This happens when  it is true a constituent $C_h\in K_0\cup\{C_0\}$, in which case the value of $\C(\F)$ is 0, so that $\C(\F)+\mu (1-\C(\F))=\mu$. 
Denoting by $\Gamma$ the set of 
possible values of $\mathscr{C}(\mathcal{F})|\mathscr{C}(\mathcal{F})$, it holds that 
\[
\C(\F)|\C(\F) \in \Gamma \subseteq \mathcal{V}=\{1,\mu, x_S+\mu(1-x_S):  0<x_S<1, \emptyset\neq S\subseteq\{1,\ldots,n\} \}.
\]
Then, as a necessary condition of coherence, $\mu$ must belong to the convex hull of the set  
\[
\Gamma^*=\Gamma\setminus \{\mu\}\subseteq \V^*=\V\setminus \{\mu\}=\{1,x_S+\mu(1-x_S):  0<x_S<1, \emptyset\neq S\subseteq\{1,\ldots,n\} \}.
\]
Notice that $\Gamma^*$ is the set of values of $\C(\F)|\C(\F)$ associated with the constituents $C_h$'s which belong to the nonempty set $K_1\cup K^*$. Moreover, if $\mu$ does not belong to the convex hull of $\V^*$, then $\mu$ does not belong to the convex hull of $\Gamma^*$. In order $\mu$ be coherent it must belong to the convex hull of $\Gamma^*$, that is it must be a linear convex combination of the set of values in $\Gamma^*$. We distinguish three cases: $(i) \; \mu=1$; $(ii) \; \mu<1$; $(iii) \; \mu>1$. In the case $(i)$ it holds that $x_S+\mu(1-x_S)=1$, for every $S$; then $\Gamma^*=\V^*=\{1\}$, hence the prevision assessment $\mu=1$ is trivially coherent and $\C(\F)|\C(\F)=1$. In the case $(ii)$ it holds that $\mu < \min \V^*$ and hence $\mu$ doesn't belong to the convex hull of $\V^*$; then $\mu$ doesn't belong to the convex hull of $\Gamma^*$, that is the prevision assessment $\mu<1$ is not coherent.
In the case $(iii)$ it holds that $\mu > \max \V^*$ and hence $\mu$ doesn't belong to the convex hull of $\V^*$; then, $\mu$ doesn't belong to the convex hull of $\Gamma^*$, that is the prevision assessment $\mu>1$ is not coherent. Therefore, the unique coherent prevision assessment on $\C(\F)|\C(\F)$ is $\mu=1$ and hence 
\[
\C(\F)|\C(\F)=\C(\F)+\mu(1-\C(\F))=\C(\F)+1-\C(\F)=1 \,.
\]
\qed
\end{proof}
In the next section we generalize   Theorem \ref{THM:MAIN}, by characterizing the p-validity of  the inference from a premise set $\F=\{E_1|H_1,\ldots, E_n|H_n\}$ to the conclusion 
 $E_{n+1}|H_{n+1}$.

\section{Characterization of P-entailment in Terms of  Iterated Conditionals}\label{SEC:4}
We recall that,
given a family $\F$ of $n$ conditional events $\{E_1|H_1,\ldots,E_n|H_n\}$, we also denote by $\C_{1 \cdots n}$ the conjunction $\C(\F)$.
Moreover, given a further conditional event $E_{n+1}|H_{n+1}$ we denote by 
 $\C_{1 \cdots n+1}$ the conjunction $\C(\F\cup\{E_{n+1}|H_{n+1}\})$.
 In other words,
\[
\C_{1 \cdots n} = (E_1|H_1) \wedge \cdots \wedge (E_n|H_n),\;\;\C_{1 \cdots n+1} = (E_1|H_1) \wedge \cdots \wedge (E_{n+1}|H_{n+1}).\]
We set
$\pr(\C_{1 \cdots n}) = x_{1 \cdots n}$ and  $\pr(\C_{1 \cdots n+1}) = x_{1 \cdots n+1}$. Let us consider the iterated conditional $(E_{n+1}|H_{n+1})|\C_{1 \cdots n}$, which by Definition \ref{DEF:GENERITER}, is  given by 
\[
(E_{n+1}|H_{n+1})|\C_{1 \cdots n} = \C_{1 \cdots n+1} + \mu (1 - \C_{1 \cdots n}),
\]
where $\mu = \pr[(E_{n+1}|H_{n+1})|\C_{1 \cdots n}]$. In the next result, by assuming $\F$ p-consistent, the p-entailment of $E_{n+1}|H_{n+1}$ from   $\F$ is characterized in terms of the  iterated conditional $(E_{n+1}|H_{n+1})|\C_{1 \cdots n}$.
\begin{theorem}\label{THM:PEITER}
A p-consistent family $\F$ p-entails $E_{n+1}|H_{n+1}$ if and only if the iterated conditional $(E_{n+1}|H_{n+1})|\C_{1 \cdots n}$ is equal to 1.
\end{theorem}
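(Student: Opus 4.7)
The natural strategy is to use Theorem~\ref{THM:PENT} as a bridge: it tells us that $\F \Rightarrow_p E_{n+1}|H_{n+1}$ is equivalent to the equality of random quantities $\C_{1\cdots n+1} = \C_{1\cdots n}$. So the problem reduces to showing that this equality of conjunctions is, in turn, equivalent to $(E_{n+1}|H_{n+1})|\C_{1\cdots n} = 1$. A preliminary observation is that the hypothesis of p-consistency of $\F$ guarantees, via Theorem~\ref{THM:PCC}, that the prevision assessment $\prev[\C(\F)] = 1$ is coherent; in particular $\C_{1\cdots n}$ cannot be the constant $0$, so the iterated conditional $(E_{n+1}|H_{n+1})|\C_{1\cdots n}$ is well defined in the sense of Definition~\ref{DEF:GENERITER}.

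For the ``only if'' direction I would argue as follows. Assuming $\F \Rightarrow_p E_{n+1}|H_{n+1}$, Theorem~\ref{THM:PENT}(ii) gives $\C_{1\cdots n+1} = \C_{1\cdots n}$. Apply Theorem~\ref{THM:CANITER} with $\F_1 = \F$ and $\F_2 = \{E_{n+1}|H_{n+1}\}$, which via formula (\ref{EQ:CANITER1}) yields
\[
(E_{n+1}|H_{n+1})|\C_{1\cdots n} \;=\; \C_{1\cdots n+1}|\C_{1\cdots n}.
\]
Substituting $\C_{1\cdots n+1} = \C_{1\cdots n}$ turns the right-hand side into $\C(\F)|\C(\F)$, which by Theorem~\ref{THM:CDATOC} coincides with the constant $1$.

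For the ``if'' direction, suppose $(E_{n+1}|H_{n+1})|\C_{1\cdots n} = 1$. Since this random quantity is the constant $1$, its prevision must equal $1$, so $\mu = \prev[(E_{n+1}|H_{n+1})|\C_{1\cdots n}] = 1$. Plugging $\mu = 1$ into the defining equation
\[
(E_{n+1}|H_{n+1})|\C_{1\cdots n} \;=\; \C_{1\cdots n+1} + \mu (1 - \C_{1\cdots n})
\]
gives $1 = \C_{1\cdots n+1} + (1 - \C_{1\cdots n})$ as an identity of random quantities, hence $\C_{1\cdots n+1} = \C_{1\cdots n}$. Invoking Theorem~\ref{THM:PENT} once more, this yields $\F \Rightarrow_p E_{n+1}|H_{n+1}$.

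The proof is essentially a direct composition of Theorems \ref{THM:PENT}, \ref{THM:CANITER} and \ref{THM:CDATOC}, so no delicate computation is expected. The only point requiring a bit of care is ensuring that the side condition $\C(\F) \neq 0$ of Definition~\ref{DEF:GENERITER} and Theorem~\ref{THM:CDATOC} is in force; this is precisely what p-consistency of $\F$ provides through Theorem~\ref{THM:PCC}, so the argument goes through cleanly.
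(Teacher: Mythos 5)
Your proof is correct, and the forward direction coincides exactly with the paper's argument: p-consistency plus Theorem~\ref{THM:PCC} to secure $\C_{1\cdots n}\neq 0$, then Theorem~\ref{THM:PENT} to get $\C_{1\cdots n+1}=\C_{1\cdots n}$, then Theorem~\ref{THM:CANITER} and Theorem~\ref{THM:CDATOC} to conclude $(E_{n+1}|H_{n+1})|\C_{1\cdots n}=\C_{1\cdots n}|\C_{1\cdots n}=1$. Where you diverge is the converse. The paper proves $(iii)\Rightarrow(i)$ directly: from $\mu=1$ it extracts only the prevision identity $x_{1\cdots n+1}=x_{1\cdots n}$, then specializes to the coherent assessment $x_1=\cdots=x_n=1$ and invokes the Fr\'echet--Hoeffding bounds of Theorem~\ref{THM:TEOREMAAI13} to force $x_{1\cdots n}=x_{1\cdots n+1}=1$ and hence $x_{n+1}=1$, which is the definition of p-entailment. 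You instead upgrade the conclusion from an identity of previsions to an identity of random quantities: since $\mu=1$, the defining equation gives $1=\C_{1\cdots n+1}+(1-\C_{1\cdots n})$ pointwise, so $\C_{1\cdots n+1}=\C_{1\cdots n}$, and you then reuse the equivalence $(ii)\Leftrightarrow(i)$ of Theorem~\ref{THM:PENT}. Both routes are sound; yours is more economical because it leans on the full strength of Theorem~\ref{THM:PENT} and avoids the Fr\'echet--Hoeffding computation entirely, while the paper's version makes explicit, at the level of probability assessments, why the premises' being assessed at $1$ forces the conclusion to be assessed at $1$ --- which is closer in spirit to Definition~\ref{PE} and does not require the (already proved, but logically heavier) implication $(ii)\Rightarrow(i)$.
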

\begin{proof}
First of all we observe that, by Theorem \ref{THM:PCC}, as $\F$ is p-consistent, the assessment  $\prev(\C_{1 \cdots n})=1$ is coherent and hence $\C_{1 \cdots n}\neq 0$, so that the iterated conditional  $(E_{n+1}|H_{n+1})|\C_{1 \cdots n}$ makes sense. We consider the following assertions: \\
$(i) \;  \F \;\;\mbox{p-entails}\;\; E_{n+1}|H_{n+1}$; 
$(ii) \; \C_{1 \cdots n+1} = \C_{1 \cdots n}$;
$(iii)\;  (E_{n+1}|H_{n+1})|\C_{1 \cdots n} = 1$. \\
By Theorem \ref{THM:PENT}, the conditions $(i)$ and $(ii)$ are equivalent, thus $(i) \;\;\Longrightarrow\;\; (ii)$. Then, in order to prove the theorem it is enough to verify that
\[
(ii) \;\;\Longrightarrow\;\; (iii) \;\;\Longrightarrow\;\; (i) \,.
\]
$(ii) \;\;\Longrightarrow\;\; (iii)$. 
By Theorem  \ref{THM:CANITER} it holds that 
$
(E_{n+1}|H_{n+1})|\C_{1 \cdots n}=\C_{1 \cdots n+1}|\C_{1 \cdots n}$.
Moreover, as $\C_{1 \cdots n+1}=\C_{1 \cdots n}$, it holds that  $\C_{1 \cdots n+1}|\C_{1 \cdots n}=\C_{1 \cdots n}|\C_{1 \cdots n}$, which  by  Theorem \ref{THM:CDATOC} is constant and coincides with 1.  Thus, 
$(E_{n+1}|H_{n+1})|\C_{1 \cdots n} = \C_{1 \cdots n}|\C_{1 \cdots n}= 1$ ,
that is $(iii)$ is satisfied.\\
$(iii) \;\;\Longrightarrow\;\; (i)$. As the condition $(iii)$ is satisfied, that is $(E_{n+1}|H_{n+1})|\C_{1 \cdots n}=1$, the unique coherent prevision assessment $\mu$ on $(E_{n+1}|H_{n+1})|\C_{1 \cdots n}$ is
 \[
\mu = 1 = \pr[\C_{1 \cdots n+1} + \mu (1- \C_{1 \cdots n})] = x_{1 \cdots n+1} + 1 - x_{1 \cdots n} \,,
 \]
 from which it follows $x_{1 \cdots n+1} = x_{1 \cdots n}$. We observe that, as $\F$ is p-consistent, it is coherent to assess $x_1= \cdots = x_n =1$. Moreover, by recalling Theorem \ref{THM:TEOREMAAI13}, it holds  that: 
 $
 \max \{x_1 + \cdots + x_n - n + 1, 0\} \;\leq\; x_{1 \cdots n} \;\leq\; \min \{x_1, \ldots, x_n\}$.\\
 Then, when $x_1= \cdots = x_n =1$
 it follows that $x_{1 \cdots n} = 1 = x_{1 \cdots n+1}$ and hence $x_{n+1}=1$. Thus, $\F$ p-entails $E_{n+1}|H_{n+1}$, that is the condition  $(i)$ is satisfied.
 \qed
\end{proof}
We recall that the Transitivity rule is not p-valid, that is $\{C|B,B|A\}$ does not p-entail $C|A$. In \cite[Theorem 5]{gilio16} it has been shown that
\[
P(C|B)=1,P(B|A)=1,P(A|(A\vee B))>0 \Rightarrow P ( C | A ) = 1,
\]
which is a weaker version of transitivity. This kind of Weak Transitivity has been also  obtained in \cite{freund1991} in the setting of preferential relations. In the next example, in order to  illustrate Theorems \ref{THM:CANITER}, \ref{THM:CDATOC} and \ref{THM:PEITER}, we consider two aspects: $(i)$ a p-valid version  of Weak Transitivity, where the constraint $P(A|(A\vee B))>0$ is replaced by $P(A|(A\vee B))=1$, by showing that $(C|A)|((C|B)\wedge(B|A)\wedge (A|(A\vee B)))=1$; $(ii)$  the non p-validity of the Transitivity rule, by showing that the iterated conditional $(C|A)|((C|B)\wedge(B|A))$ does not coincide with the constant 1. 
\begin{example}($i$)-\emph{Weak Transitivity}.
We consider the premise set $\F=\{C|B,B|A, A|(A\vee B)\}$ and the conclusion 
$C|A$, where the events $A,B,C$ are logically independent. 
By Definition \ref{DEF:CONGn}  
\[
\begin{array}{l}
\C(\F)=(C|B)\wedge(B|A)\wedge (A|(A\vee B))
=\left\{\begin{array}{ll}
	1, &\mbox{if $ABC$ is true,}\\	
	0, &\mbox{if $AB\no{C} \vee A\no{B}\vee \no{A}B$ is true,}\\
	z, &\mbox{if $\no{A}\no{B}$ is true,}\\	
\end{array}
\right.=\\
=ABC|(A\vee B),
\end{array}
\]
where $z=\prev[C(\F)]=P(ABC|(A\vee B))$. As $\C(\F)=ABC|(A\vee B)\subseteq C|A$, it follows that $\C(\F)\wedge (C|A)=\C(\F)$ and by  Theorem \ref{THM:PENT}, $\{C|B,B|A, A|(A\vee B)\}$ p-entails $C|A$.
Then,  by Theorem \ref{THM:PEITER},  
$(C|A)|((C|B)\wedge(B|A)\wedge (A|(A\vee B)))$ is constant and coincides with 1. Indeed, 
  by recalling Theorems \ref{THM:CANITER} and \ref{THM:CDATOC}, it holds that
\[
\begin{array}{ll}
(C|A)|((C|B)\wedge(B|A)\wedge (A|(A\vee B)))=(C|A)|\C(\F)=\\=((C|A)\wedge \C(\F))|\C(\F)=\C(\F)|\C(\F)=1.
\end{array}
\]
($ii$)-\emph{Transitivity}. We recall that Transitivity is not p-valid, that is the premise set $\{C|B,B|A\}$ does not p-entail the conclusion $C|A$. Then the iterated conditional $(C|A)|((C|B)\wedge(B|A))$ does not coincide with 1, as we show below. 
We set $P(B|A)=x$, $P(BC|A)=y$,   $\prev[(C|B)\wedge(B|A)]=u,$ $\prev[(C|B)\wedge(B|A)\wedge (C|A)]=w$, then by Definition \ref{CONJUNCTION} we obtain that \begin{equation*}
(C|B)\wedge(B|A)=(ABC+x\no{A}BC)|(A\vee B)=ABC+x\no{A}BC+u\no{A}\,\no{B},
\end{equation*}
and
\begin{equation*}\label{EQ:TR2}
\begin{array}{lll}
(C|B)\wedge(B|A)\wedge (C|A)= (C|B)\wedge(BC|A)=(ABC+y\no{A}BC)|(A\vee B)=\\=ABC+y\no{A}BC+w\no{A}\,\no{B}.
\end{array}
\end{equation*} 
Defining $\prev[(C|A)|((C|B)\wedge(B|A))]=\nu$, by linearity of prevision it holds that  $\nu=w+\nu(1-u)$ and  hence
\begin{equation*}\label{EQ:TR3}
\begin{array}{lll}
(C|A)|((C|B)\wedge(B|A))= (C|B)\wedge(B|A)\wedge(C|A)+\nu(1-(C|B)\wedge(B|A))=\\
=\left\{\begin{array}{ll}
1, &\mbox{if $ABC$ is true,}\\	
\nu, &\mbox{if $A\no{B}\vee B\no{C}$ is true,}\\
y+\nu(1-x), &\mbox{if $\no{A}BC$ is true,}\\	
\nu, &\mbox{if $\no{A}\no{B}$ is true.}\\	
\end{array}
\right.
\end{array}
\end{equation*} 
We observe that in general  $y+\nu(1-x)\neq 1$, for instance when $(x,y)=(1,0)$ it holds that $y+\nu(1-x)=0$. Thus,  in agreement with Theorem \ref{THM:PEITER}, the iterated conditional $(C|A)|((C|B)\wedge(B|A))$ does not coincide with the constant 1. 
\end{example}
Notice that  the p-validity of other inference rules, with a two-premise set $\{E_1|H_1,E_2|H_2\}$ and a conclusion $E_3|H_3$,  has been  examined in \cite{GiPS20} by checking whether the condition $(E_3|H_3)|((E_1|H_1)\wedge(E_2|H_2))=1$ is satisfied.
\section{Conclusions}\label{SEC:5}
In this paper, we generalized the notion of iterated conditional by introducing the random object $\C(\F_2)|\C(\F_1)$.  We showed that  $\prev[\mathscr{C}(\mathcal{F}_2)\wedge \mathscr{C}(\mathcal{F}_1)]= \prev[\mathscr{C}(\mathcal{F}_2)|\mathscr{C}(\mathcal{F}_1)] \prev[\mathscr{C}(\mathcal{F}_1)]$
and that 
$
(\mathscr{C}(\mathcal{F}_2)\wedge \mathscr{C}(\mathcal{F}_1))|\mathscr{C}(\mathcal{F}_1)=
\mathscr{C}(\mathcal{F}_2)|\mathscr{C}(\mathcal{F}_1)$. Then, we verified that the iterated conditional  $\C(\F)|\C(\F)$ is constant and coincides with 1. Moreover, under p-consistency of $\F$, we characterized the p-entailment of $E_{n+1}|H_{n+1}$ from $\F$ by the property that the iterated conditional where the antecedent is the conjunction $\C(\F)$ and the consequent is  $(E_{n+1}|H_{n+1})$ coincides with the constant 1. In other words, $\F$ p-entails $(E_{n+1}|H_{n+1})$ if and only if  $(E_{n+1}|H_{n+1})|\C(\F)=1$.  We have also illustrated this characterization by an example related with weak transitivity. We observe that a particular case of this characterization is obtained when we consider a (p-consistent) family of $n$ unconditional events $\F=\{E_1,\ldots, E_n\}$   and a further event $E_{n+1}$. In this case  $\F$ p-entails $E_{n+1}$ if and only if $E_1\cdots E_n\subseteq E_{n+1}$, which also amounts to the property that the conditional event  $E_{n+1}|E_1\cdots E_n$ coincides with 1. 
\\ \ \\
{\bf Acknowledgements.} We thank the four anonymous reviewers for their useful comments and suggestions. G. Sanfilippo has been partially supported by the INdAM–GNAMPA Project 2020 Grant U-UFMBAZ-2020-000819.

\bibliographystyle{splncs04} 
\bibliography{bibfile}

\end{document}